\documentclass{amsart}            

\begin{document}
\title[Lie groups in two main classes]
{Lie groups as 3-dimensional almost contact B-metric manifolds
in two main classes}

\author{Miroslava Ivanova}

\address[Miroslava Ivanova]{Department of Informatics and Mathematics, Trakia University,
Stara Zagora, 6000, Bulgaria,
E-mail: mivanova@uni-sz.bg}

\newcommand{\ie}{i.e. }
\newcommand{\Id}{\mathrm{Id}}
\newcommand{\X}{\mathfrak{X}}
\newcommand{\W}{\mathcal{W}}
\newcommand{\F}{\mathcal{F}}
\newcommand{\T}{\mathcal{T}}
\newcommand{\LL}{\mathcal{L}}
\newcommand{\TT}{\mathfrak{T}}
\newcommand{\G}{\mathcal{G}}
\newcommand{\I}{\mathcal{I}}
\newcommand{\M}{(M,\allowbreak{}\ff,\allowbreak{}\xi,\allowbreak{}\eta,\allowbreak{}g)}
\newcommand{\Lf}{(G,\ff,\xi,\eta,g)}
\newcommand{\R}{\mathbb{R}}
\newcommand{\N}{\widehat{N}}
\newcommand{\s}{\mathfrak{S}}
\newcommand{\n}{\nabla}
\newcommand{\nn}{\tilde{\nabla}}
\newcommand{\tg}{\tilde{g}}
\newcommand{\ff}{\varphi}
\newcommand{\D}{{\rm d}}
\newcommand{\id}{{\rm id}}
\newcommand{\al}{\alpha}
\newcommand{\bt}{\beta}
\newcommand{\gm}{\gamma}
\newcommand{\dt}{\delta}
\newcommand{\lm}{\lambda}
\newcommand{\ta}{\theta}
\newcommand{\om}{\omega}
\newcommand{\ep}{\varepsilon}
\newcommand{\ea}{\varepsilon_\alpha}
\newcommand{\eb}{\varepsilon_\beta}
\newcommand{\eg}{\varepsilon_\gamma}
\newcommand{\sx}{\mathop{\mathfrak{S}}\limits_{x,y,z}}
\newcommand{\norm}[1]{\left\Vert#1\right\Vert ^2}
\newcommand{\nf}{\norm{\n \ff}}
\newcommand{\nN}{\norm{N}}
\newcommand{\Span}{\mathrm{span}}
\newcommand{\grad}{\mathrm{grad}}
\newcommand{\thmref}[1]{The\-o\-rem~\ref{#1}}
\newcommand{\propref}[1]{Pro\-po\-si\-ti\-on~\ref{#1}}
\newcommand{\secref}[1]{\S\ref{#1}}
\newcommand{\lemref}[1]{Lem\-ma~\ref{#1}}
\newcommand{\dfnref}[1]{De\-fi\-ni\-ti\-on~\ref{#1}}
\newcommand{\corref}[1]{Corollary~\ref{#1}}



\numberwithin{equation}{section}
\newtheorem{thm}{Theorem}[section]
\newtheorem{lem}[thm]{Lemma}
\newtheorem{prop}[thm]{Proposition}
\newtheorem{cor}[thm]{Corollary}
\newtheorem{defn}{Definition}[section]

\hyphenation{Her-mi-ti-an ma-ni-fold ah-ler-ian}

\begin{abstract}
Three-dimensional almost contact B-metric manifolds
are constructed by a three-parametric family of Lie groups. It is
established the class of the investigated manifolds
which has an important geometrical interpretation.
It is determined also the type of the constructed Lie algebras
in the Bianchi classification.
There are given some geometric characteristics and properties of the considered manifolds.
\end{abstract}

\keywords{Almost contact manifold; B-metric; $\ff$B-connection;
$\ff$-canonical connection; $\ff$-holomorphic section;
$\xi$-section; Lie group; Lie algebra.}

\maketitle

\section*{Introduction}

The geometry of the almost contact B-metric manifolds is the
odd-dimensional counterpart of the geometry of the almost complex
manifolds with Norden metric \cite{GaBo, GriMekDjel}. The
almost contact B-metric manifolds are
introduced in \cite{GaMiGr}. These manifolds are investigated and
studied for instance in \cite{GaMiGr, Man4, Man30, Man-Gri1, Man-Gri2, ManIv38,
ManIv36, NakGri}.

In this paper we focus our attention on the Lie groups considered as
three-dimensional almost contact B-metric manifolds, bearing in mind
the investigations
in \cite{H.Man-Mek}. We construct a family of Lie groups and equip them with
an almost contact B-metric structure from the direct sum of two of main classes.
These classes are $\F_1$ and $\F_{11}$, where the fundamental tensor $F$ is expressed explicitly by the structure $(\ff,\xi,\eta,g)$. The former class is the odd-dimensional analogue of the class of conformal K\"ahler manifolds with Norden metric and the latter class is the only basic class where the Lee form $\om$ is not vanish whereas $\ta$ and $\ta^*$ are zero. Moreover, the metric $g$ is represented on the $\F_1$-manifolds by its horizontal component $g(\ff^2\cdot,\ff^2\cdot)$, i.e. the restriction of $g$ on the contact distribution $\ker(\eta)$, whereas on the $\F_{11}$-manifolds $g$ is represented by its vertical component $\eta\otimes\eta$.

Our aim is to study some important geometric characteristics and
properties of the obtained manifolds.

The paper is organized as follows. In Sec.~\ref{mi:sec2}, we give
necessary facts about the considered manifolds. In Sec.~\ref{mi:sec3},
we construct a family of 3-dimensional Lie groups with almost contact B-metric
structure and characterize it.

\section{Almost contact manifolds with B-metric}\label{mi:sec2}

Let $(M,\ff,\xi,\eta,g)$ be a $(2n+1)$-dimensional \emph{almost
contact B-met\-ric manifold}, i.e. $(\ff,\xi,\eta)$ is an almost
contact structure determined by a tensor field $\ff$ of type
(1,1), a vector field $\xi$ and its dual 1-form $\eta$ as follows:
\begin{equation*}
\ff\xi=0, \quad \ff^2 = -\Id + \eta \otimes \xi, \quad
\eta\circ\ff=0, \quad \eta(\xi)=1,
\end{equation*}
where $\Id$ is the identity; moreover, $g$ is a pseudo-Riemannian
metric satisfying
\begin{equation*}
g(\ff x,\ff y)=-g(x,y)+\eta(x)\eta(y)
\end{equation*}
for arbitrary tangent vectors $x,y\in T_pM$ at an arbitrary point
$p\in M$ \cite{GaMiGr}.

Further, $x$, $y$, $z$, $w$ will stand for arbitrary elements of
$\X(M)$ or $T_pM$ at arbitrary $p\in M$.

We note that the restriction of a B-metric on the contact
distribution $H=\ker(\eta)$ coincides with the corresponding Norden
metric with respect to the almost complex structure; the
restriction of $\ff$ on $H$ acts as an anti-isometry on the
metric on $H$, which is the restriction of $g$ on $H$. Thus, it is obtained
a correlation between a $(2n+1)$-dimensional almost contact
B-metric manifold and a $2n$-dimensional almost complex manifold
with Norden metric. 

The associated metric $\tilde{g}$ of $g$ on $M$ is defined by \(
\tilde{g}(x,y)=g(x,\ff y)+\eta(x)\eta(y)\) and is also a B-metric.
The manifold $(M,\ff,\xi,\eta,\tilde{g})$ is an almost contact
B-metric manifold, too. Both metrics $g$ and $\tilde{g}$ are
indefinite
of signature $(n+1,n)$.

The structure group of $\M$ is $\G\times\I$, where $\G$ is the group $\mathcal{GL}(n;\mathbb{C})\cap
\mathcal{O}(n,n)$ and $\I$ is the
identity on $\Span(\xi)$.

Let $\n$ be the Levi-Civita connection of $g$. The tensor $F$ of
type (0,3) on $M$ is defined by $F(x,y,z)=g\bigl( \left( \n_x \ff
\right)y,z\bigr)$. It satisfies the following identities:
\begin{equation*}\label{mi:F-prop}
F(x,y,z)=F(x,z,y)
=F(x,\ff y,\ff z)+\eta(y)F(x,\xi,z)
+\eta(z)F(x,y,\xi).
\end{equation*}

Almost contact B-metric manifolds were classified in \cite{GaMiGr} into eleven
basic classes $\F_i$
$(i=1,\allowbreak{}2,\allowbreak{}\dots,\allowbreak{}{11})$ with
respect to $F$. These eleven basic classes
intersect in the special class $\F_0$ determined by the condition
$F(x,y,z)=0$. Hence $\F_0$ is the class of almost contact B-metric
manifolds with $\n$-parallel structures, \ie
$\n\ff=\n\xi=\n\eta=\n g=\n \tilde{g}=0$.

Let $\{e_i\}_{i=1}^{2n+1}=\{e_1,e_2,\dots,e_{2n+1}\}$ be a basis
of the tangent space $T_pM$ of $M$ at an arbitrary point $p\in M$.
In this basis, let $g_{ij}$ and $g^{ij}$ be the components of the matrix of $g$ and its inverse, respectively. The Lee forms
$\ta$, $\ta^*$ and $\om$ associated with $F$ are defined by:
\begin{equation*}\label{mi:Lee forms}
\ta(z)=g^{ij}F(e_i,e_j,z), \quad \ta^*(z)=g^{ij}F(e_i,\ff e_j,z),
\quad \om(z)=F(\xi,\xi,z).
\end{equation*}

The square norms of $\nabla \ff$, $\n \eta$ and $\n \xi$ are
defined by:
\begin{equation}
\begin{array}{c}\label{mi:sn f-eta-xi}
\norm{\nabla \ff}=g^{ij}g^{ks} g\bigl(\left(\nabla_{e_i}
\ff\right)e_k,\left(\nabla_{e_j} \ff\right)e_s\bigr),\\[4pt]
\norm{\n\eta}=g^{ij}g^{ks}\left(\nabla_{e_i}\eta\right)e_k\left(\nabla_{e_j}
    \eta\right)e_s, \quad \norm{\n\xi}=g^{ij}g\left(\nabla_{e_i} \xi,\nabla_{e_j}
    \xi\right),
\end{array}
\end{equation}
respectively (\cite{Man33, Man30}). Let us remark, if $\M$ is an $\F_0$-manifold then the 
equality $\norm{\nabla \ff}=0$ is valid, but the inverse
implication is not always true. According to \cite{Man30}, an
almost contact B-metric manifold satisfying the condition
$\norm{\nabla \ff}=0$ is called an
\emph{isotropic-$\F_0$-manifold}.

The curvature tensor $R$ of type (1,3) of $\n$ is defined by
$R(x,y)z=\n_x\n_yz-\n_y\n_xz-\n_{[x,y]}z$. The corresponding
tensor of $R$ of type (0,4) is denoted by the same letter and is
defined by  $R(x,y,z,w)=g(R(x,y)z,w)$.

The Ricci tensor $\rho$ and the scalar curvature $\tau$ for $R$ as
well as their associated quantities are defined as usual by
$\rho(x,y)=g^{ij}R(e_i,x,y,e_j)$, $\tau=g^{ij}\rho(e_i,e_j)$,
$\rho^{*}(x,y)=g^{ij}R(e_i,x,y,\ff e_j)$ and
$\tau^{*}=g^{ij}\rho^{*}(e_i,e_j)$, respectively.

In \cite{ManNak1}, an almost contact B-metric manifold $\M$ is
called \emph{$*$-Einstein} if the Ricci tensor has the form
$\rho=\al \tg|_H$, where $\al$ is a real constant.

If $\al$ is a non-degenerate 2-plane (section) in $T_pM$, $p\in M$,
then, according to \cite{NakGri}, the special
2-planes with respect to $(\ff,\xi,\eta,g)$ are: a \emph{totally
real section} if $\al$ is orthogonal to its $\ff$-image $\ff\al$
and $\xi$, a \emph{$\ff$-holomorphic  section} if $\al$ coincides
with $\ff\al$ and a \emph{$\xi$-section} if $\xi$ lies on $\al$.

The sectional curvature $k(\al; p)(R)$ for $R$ of $\al$ having an arbitrary
basis $\{x,y\}$ at $p$ is given by
\begin{equation}\label{mi:sec curv}
k(\al; p)(R)=\frac{R(x,y,y,x)}{g(x,x)g(y,y)-g(x,y)^2}.%
\end{equation}

In \cite{Man30}, a linear connection $D$ is called a \emph{natural
connection} on an arbitrary almost contact B-metric manifold if
its structure tensors are parallel with respect to $D$, \ie
$D\ff=D\xi=D\eta=Dg=D\tilde{g}=0$.
According to \cite{ManIv36}, $D$ is
natural on $(M,\ff,\allowbreak\xi,\eta,g)$ if and only if
$D\ff=Dg=0$.
Natural connections exist on any almost contact B-metric manifold. They are restricted to $\n$ only on an $\F_0$-manifold.

In \cite{Man-Gri2}, it is introduced a natural connection
$\dot{D}$ on $\M$ in all basic classes by
\begin{equation}\label{mi:defn-fiB}
\begin{array}{l}
\dot{D}_xy=\n_xy+\frac{1}{2}\bigl\{\left(\n_x\ff\right)\ff
y+\left(\n_x\eta\right)y\cdot\xi\bigr\}-\eta(y)\n_x\xi.
\end{array}
\end{equation}
This connection is called a \emph{$\ff$B-connection} in
\cite{ManIv37} and it is studied for the main classes
$\F_1,\F_4,\F_5,\F_{11}$  in \cite{Man-Gri2, Man3, Man4}. The
$\ff$B-connection is the odd-dimensional counterpart of the
B-connection on the corresponding almost complex manifold with
Norden metric, studied for the class $\W_1$ in \cite{GaGrMi}.

In \cite{ManIv38}, a natural connection $\ddot{D}$ is
called a \emph{$\ff$-canonical connection} on
$(M,\ff,\xi,\allowbreak\eta,g)$ if its torsion tensor $\ddot{T}$,
defined by $\ddot T(x,y,z)=g(\ddot D_xy-\ddot D_yx-[x,y],z)$,
satisfies the following identity:
\begin{equation*}\label{mi:T-can}
\begin{split}
    &\ddot{T}(x,y,z)-\ddot{T}(x,z,y)-\ddot{T}(x,\ff y,\ff z)
    +\ddot{T}(x,\ff z,\ff y)=\\
    &=\eta(x)\left\{\ddot{T}(\xi,y,z)-\ddot{T}(\xi,z,y)
    -\ddot{T}(\xi,\ff y,\ff z)+\ddot{T}(\xi,\ff z,\ff y)\right\}\\
    &+\eta(y)\left\{\ddot{T}(x,\xi,z)-\ddot{T}(x,z,\xi)
    -\eta(x)\ddot{T}(z,\xi,\xi)\right\}\\
    &-\eta(z)\left\{\ddot{T}
(x,\xi,y)-\ddot{T}(x,y,\xi)-\eta(x)\ddot{T}(y,\xi,\xi)\right\}.
\end{split}
\end{equation*}
Moreover, there is proven that the $\ff$B-connection and the
$\ff$-canonical connection coincide on an almost contact B-metric
manifold if and only if the manifold belongs to the class
$\F_1\oplus\F_2\oplus\F_4\oplus\F_5\oplus\F_6\oplus\F_8\oplus\F_9\oplus\F_{10}\oplus\F_{11}$.

Another natural connection on $\M$, which is called $\ff$KT-connection, is introduced and studied in \cite{Man30}. This connection is defined by the condition its torsion to be a 3-form. There is proven that the $\ff$KT-connection exists if and only if $\M$ belongs to $\F_3\oplus\F_7$.

According to \cite{H.Man1}, the class of 3-dimensional almost contact B-metric manifolds is
$
\F_1\oplus\F_4\oplus\F_5\oplus\F_8\oplus\F_9\oplus\F_{10}\oplus\F_{11}.
$

\section{A family of Lie groups as 3-dimensional $(\F_1\oplus\F_{11})$-manifolds}\label{mi:sec3}

Let $G$ be a three-dimensional real connected Lie group and
$\mathfrak{g}$ be its Lie algebra. Let
$\left\{e_0,e_1,e_2\right\}$ be a global basis of left-invariant
vector fields on $G$. We define an almost contact structure
$(\ff,\xi,\eta)$ on $G$ by
\begin{equation}\label{mi:f}
\begin{array}{l}
\ff e_0 = o,\quad \ff e_1 = e_2,\quad \ff e_2 =-e_1,\quad \xi = e_0;\\
\eta(e_0)=1,\quad \eta(e_1)=\eta(e_2)=0,
\end{array}
\end{equation}
where $o$ is the zero vector field.
We define a B-metric $g$ on $G$ by
\begin{equation}\label{mi:g}
\begin{array}{l}
g(e_0,e_0)=g(e_1,e_1)=-g(e_2,e_2)=1,
\\
g(e_0,e_1)=g(e_0,e_2)=g(e_1,e_2)=0.
\end{array}
\end{equation}

We consider the Lie algebra $\mathfrak{g}$ on $G$, determined by the
following non-zero commutators:
\begin{equation}\label{mi:komutator}
\left[e_0,e_1\right]=-de_0,\quad \left[e_0,e_2\right]=ce_0, \quad
\left[e_1,e_2\right]=ae_1+be_2,
\end{equation}
where $a,b,c,d\in\R$.
We verify immediately that the Jacobi identity for this Lie algebra is
satisfied if and only if the condition $ad=bc$ holds. By virtue of the latter
equality, $G$ is a three-parametric family of Lie groups.

\begin{thm}\label{mm:thm F1+F11}
Let $(G,\ff,\xi,\eta,g)$ be a three-dimensional connected Lie
group with almost contact B-metric structure determined by
\eqref{mi:f}, \eqref{mi:g} and \eqref{mi:komutator} with the condition $ad=bc$. Then it
belongs to the class $\F_1\oplus\F_{11}$.
\end{thm}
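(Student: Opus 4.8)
The plan is to compute the fundamental tensor $F$ of the constructed structure directly from its definition $F(x,y,z)=g\bigl((\n_x\ff)y,z\bigr)$ and then read off its class by comparing with the defining conditions of the basic classes. Since $\ff$, $\xi$, $\eta$, $g$ and the brackets \eqref{mi:komutator} are all left-invariant, the first and main computational step is to determine the Levi-Civita connection $\n$ on the basis $\{e_0,e_1,e_2\}$ via the Koszul formula, which on a Lie group with left-invariant metric reduces to
\begin{equation*}
2g(\n_{e_i}e_j,e_k)=g([e_i,e_j],e_k)-g([e_j,e_k],e_i)+g([e_k,e_i],e_j),
\end{equation*}
all differentiation terms vanishing because $g(e_j,e_k)$ is constant. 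Substituting \eqref{mi:komutator} and \eqref{mi:g}, and keeping track of the sign coming from $g(e_2,e_2)=-1$, yields $\n_{e_i}e_j$ for every pair; as a consistency check one verifies torsion-freeness against \eqref{mi:komutator} and metric compatibility against \eqref{mi:g}.

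Next I would compute $(\n_{e_i}\ff)e_j=\n_{e_i}(\ff e_j)-\ff(\n_{e_i}e_j)$ using \eqref{mi:f}, and then evaluate all components $F(e_i,e_j,e_k)$, exploiting the symmetry $F(x,y,z)=F(x,z,y)$ to halve the work. I expect the nonzero components to split into two groups: those with all three arguments horizontal, \ie in $\Span(e_1,e_2)=H$, governed by the parameters $a,b$, and those of the form $F(\xi,\xi,\cdot)$ and $F(\xi,\cdot,\xi)$, governed by $c,d$. From these I would compute the three Lee forms $\ta$, $\ta^*$, $\om$; a key observation to record is that $\ta(\xi)=\ta^*(\xi)=0$, which already rules out any $\F_4$- and $\F_5$-component.

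The decisive step is to match $F$ against the defining tensors of the admissible classes. Because in dimension three the structure class is $\F_1\oplus\F_4\oplus\F_5\oplus\F_8\oplus\F_9\oplus\F_{10}\oplus\F_{11}$, it suffices to exhibit $F$ as a sum $F=F^{(1)}+F^{(11)}$, where $F^{(11)}(x,y,z)=\eta(x)\{\eta(y)\om(z)+\eta(z)\om(y)\}$ is the defining form of $\F_{11}$, and $F^{(1)}$ is the $\F_1$-tensor, \ie the horizontal part, whose restriction to $H$ is exactly the $\W_1$-form of the associated $2$-dimensional Norden structure $(H,\ff|_H,g|_H)$. Concretely, I would verify that the $F(\xi,\xi,\cdot)$ and $F(\xi,\cdot,\xi)$ components reproduce $F^{(11)}$ with $\om(e_1)=c$, $\om(e_2)=d$, and that the horizontal components reproduce $F^{(1)}$ expressed through $\ta|_H$ and $\ta^*|_H$; simultaneously the components that would feed $\F_8$, $\F_9$, $\F_{10}$ must be seen to vanish. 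The main obstacle is precisely this bookkeeping: recalling the exact normalized defining equations of the classes and carrying the signs from the indefinite metric through correctly, so that every component of $F$ lands in $\F_1$ or $\F_{11}$ and nowhere else, giving the claimed membership in $\F_1\oplus\F_{11}$.
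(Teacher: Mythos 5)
Your proposal is correct and follows essentially the same route as the paper: the paper likewise applies the Koszul formula (folding it directly into the definition of $F$ to get a commutator expression for $2F_{ijk}$ rather than first listing $\n_{e_i}e_j$), computes the nonzero components $F_{001}=F_{010}=c$, $F_{002}=F_{020}=d$, $F_{111}=F_{122}=2a$, $F_{211}=F_{222}=-2b$, and then verifies $F=F^{1}+F^{11}$ against the explicit three-dimensional component forms of the basic classes from \cite{H.Man1}, with $\ta_1=2a$, $\ta_2=2b$, $\om_1=c$, $\om_2=d$. Your coordinate-free matching of the vertical part with $\eta(x)\{\eta(y)\om(z)+\eta(z)\om(y)\}$ and of the horizontal part with the $\W_1$-form on $(H,\ff|_H,g|_H)$ is exactly the content of the paper's equation \eqref{Fi3}, so the two arguments coincide in substance.
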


\begin{proof}
The well-known Koszul equality of the Levi-Civita connection $\n$ of $g$
\begin{equation}\label{mi:Koszul}
2g\left(\n_{e_i}e_j,e_k\right)=g\left(\left[e_i,e_j\right],e_k\right)
+g\left(\left[e_k,e_i\right],e_j\right)+g\left(\left[e_k,e_j\right],e_i\right)
\end{equation}
implies the following form of the components
$F_{ijk}=F(e_i,e_j,e_k)$ of the tensor $F$:
\begin{equation}\label{mi:F}
\begin{split}
 2F_{ijk}=g\left(\left[e_i,\ff e_j\right]-\ff
\left[e_i,e_j\right],e_k\right)&+g\left(\ff
\left[e_k,e_i\right]-\left[\ff e_k,e_i\right],e_j\right)\\[4 pt]
&+g\left(\left[e_k,\ff e_j\right]-\left[\ff
e_k,e_j\right],e_i\right).
\end{split}
\end{equation}
Using \eqref{mi:F} and \eqref{mi:komutator}, we get the following non-zero
components $F_{ijk}$:
\begin{equation}\label{mi:Fijk}
\begin{array}{ll}
F_{001}=F_{010}=c, \quad &F_{002}=F_{020}=d, \\[4 pt]
F_{111}=F_{122}=2a, \quad &F_{211}=F_{222}=-2b.
\end{array}
\end{equation}

By direct verification, we establish that the components in
\eqref{mi:Fijk} satisfy the condition $F=F^1+F^{11}$ for the components $F^s$
of $F$ in the basic classes $\F_s$ $(s=1,11)$ having the following form (see \cite{H.Man1})
\begin{equation}\label{Fi3}
\begin{array}{l}
F^{1}(x,y,z)=\left(x^1\ta_1-x^2\ta_2\right)\left(y^1z^1+y^2z^2\right),\\[4pt]
\phantom{F^{1}(x,y,z)=}
\ta_1=F_{111}=F_{122},\quad \ta_2=-F_{211}=-F_{222}; \\[4pt]
F^{11}(x,y,z)=x^0\bigl\{\left(y^1z^0+y^0z^1\right)\om_{1}
+\left(y^2z^0+y^0z^2\right)\om_{2}\bigr\},\\[4pt]
\phantom{F^{11}(x,y,z)=}
\om_1=F_{010}=F_{001},\quad \om_2=F_{020}=F_{002},
\end{array}
\end{equation}
where $\ta_i=\ta(e_i)$ and $\om_i=\om(e_i)$ $(i=1,2)$ are determined by $\ta_1=2a$, $\ta_2=2b$, $\om_1=c$, $\om_2=d$.
Therefore, the manifold
$(G,\ff,\xi,\eta,g)$ belongs to the class $\F_1\oplus\F_{11}$ of
the mentioned classification.

Obviously, $(G,\ff,\xi,\eta,g)$ belongs to $\F_1$, $\F_{11}$ and $\F_0$ if and only
if for all $i\in\{1,2\}$ the parameters $\ta_i$, $\om_i$ and $\ta_i=\om_i$ vanish, respectively.

Bearing in mind the above, the commutators in
\eqref{mi:komutator} take the form
\begin{equation}\label{mi:komutator=}
\begin{array}{l}
\left[e_0,e_1\right]=-\om_2e_0,\quad \left[e_0,e_2\right]=\om_1e_0, \quad
\left[e_1,e_2\right]=\frac12(\ta_1e_1+\ta_2e_2),\\[4pt]
 \ta_1\om_2=\ta_2\om_1
\end{array}
\end{equation}
in terms of the basic components of the Lee forms $\ta$ and $\om$.
\end{proof}

In \cite{Kow}, it is consider the matrix Lie group $G_I$ of the
following form
\[
G_I=\left(%
\begin{array}{ccc}
  e^{-z} & 0 & x \\
  0 & e^z & y \\
  0 & 0 & 1 \\
\end{array}%
\right),
\]
where $x,y,z\in\mathbb{R}$. This Lie group is the group of
hyperbolic motions of the plane $\mathbb{R}^2$. The corresponding
Lie algebra is determined by the following commutators:
\begin{equation*}
\mathfrak{g}_I: \quad \left[X_1,X_3\right]=X_1,\quad
\left[X_2,X_3\right]=-X_2, \quad \left[X_1,X_2\right]=0.
\end{equation*}
The type of this Lie algebra according to the Bianchi
classification given in \cite{Bia1, Bia2} of three-dimensional
real Lie algebras is Bia(V).

Bearing in mind \cite{H.Man2}, the matrix representation of the
Lie algebra $\mathfrak{g}_I$ in the class $\F_1\oplus\F_{11}$ is
\[
A=\left(%
\begin{array}{ccc}
  -\om_2 r+\om_1 s & 0 & 0 \\
  \om_2 t & \frac{1}{2}\ta_1 q & \frac{1}{2}\ta_2 q \\
  -\om_1 t & -\frac{1}{2}\ta_1 p & -\frac{1}{2}\ta_2 p \\
\end{array}%
\right),
\]
where $p,q,r,s,t\in\mathbb{R}$. Then substituting $X_1=e_2$,
$X_2=e_0$, $X_3=-e_1$, $\ta_1=0$, $\ta_2=2$, $\om_1=0$, $\om_2=-1$
or $X_1=e_1$, $X_2=-e_0$, $X_3=e_2$, $\ta_1=2$, $\ta_2=0$,
$\om_1=-1$, $\om_2=0$, we have that $G_I$ can be considered as an
almost contact B-metric manifold of the class $\F_1\oplus\F_{11}$
\cite{H.Man3}.

Using \eqref{mi:Koszul} and \eqref{mi:komutator=}, we obtain the
components of $\n$:
\begin{equation}\label{mi:nabli}
\begin{array}{c}
\begin{array}{c}
\n_{e_0}e_0=\om_2e_1+\om_1e_2, \quad \n_{e_0}e_1=-\om_2e_0, \quad \n_{e_0}e_2=\om_1e_0,
\end{array}
\\[4pt]
\begin{array}{ll}
\n_{e_1}e_1=\frac{1}{2}\ta_1e_2, \quad & \n_{e_1}e_2=\frac{1}{2}\ta_1e_1, \\[4pt]
\n_{e_2}e_1=-\frac{1}{2}\ta_2e_2,\quad & \n_{e_2}e_2=-\frac{1}{2}\ta_2e_1.
\end{array}
\end{array}
\end{equation}

We denote by $R_{ijkl}=R(e_i,e_j,e_k,e_l)$ the components of the
curvature tensor $R$, $\rho_{jk}=\rho(e_j,e_k)$ of the Ricci
tensor $\rho$, $\rho^*_{jk}=\rho^*(e_j,e_k)$ of the associated
Ricci tensor $\rho^*$ and $k_{ij}$ of the sectional curvature for
$\n$ of the basic 2-plane $\al_{ij}$ with a basis $\{e_i,e_j\}$,
where $e_i, e_j\in\{e_0, e_1, e_2\}$. On the considered manifold
$(G,\ff,\xi,\eta,g)$ the basic 2-planes $\al_{ij}$ are:
$\ff$-holomorphic section
--- $\al_{12}$ and $\xi$-sections --- $\al_{01}$, $\al_{02}$. Further, by \eqref{mi:sec curv}, \eqref{mi:g}, \eqref{mi:komutator=} and
\eqref{mi:nabli}, we compute
\begin{equation}\label{mi:R-F1+F11}
\begin{array}{c}
R_{0110}=k_{01}=\frac{1}{2}\ta_1\om_1-\om_2^2, \quad R_{0220}=-k_{02}=-\om_1^2+\frac{1}{2}\ta_2\om_2,\\[4 pt]
R_{0120}=\rho_{12}=\frac{1}{2}\rho^*_{00}=\frac{1}{2}\tau^*=\left(\om_1-\frac{1}{2}\ta_1\right)\om_2,\\[4 pt]
R_{1221}=-k_{12}=-\rho^*_{12}=-\frac{1}{4}(\ta_1^2-\ta_2^2),\\[4pt]
\rho_{00}=\frac12\left(\ta_1\om_1-\ta_2\om_2\right)+(\om_1^2-\om_2^2), \\[4pt] \rho_{11}=\frac14\left(\ta_1^2-\ta_2^2\right)+\frac12\ta_1\om_1-\om_2^2,\quad
\rho_{22}=-\frac14\left(\ta_1^2-\ta_2^2\right)+\frac12\ta_2\om_2-\om_1^2,\\[4pt]
\tau=\frac12\left(\ta_1^2-\ta_2^2\right)+\left(\ta_1\om_1-\ta_2\om_2\right)+2(\om_1^2-\om_2^2).
\end{array}
\end{equation}
The rest of non-zero components of $R$, $\rho$ and $\rho^*$ are
determined by \eqref{mi:R-F1+F11} and the properties
$R_{ijkl}=R_{klij},$ $R_{ijkl}=-R_{jikl}=-R_{ijlk},$
$\rho_{jk}=\rho_{kj}$ and $\rho^*_{jk}=\rho^*_{kj}.$

Taking into account \eqref{mi:sn f-eta-xi}, we have
\begin{gather}
\norm{\nabla
\ff}=2\left(\ta_1^2-\ta_2^2+\om_1^2-\om_2^2\right),\label{norm fi}
\\[4pt]
\norm{\nabla \eta}=\norm{\nabla \xi}=-\left(\om_1^2-\om_2^2\right).\label{mi:norm n}
\end{gather}

Denoting $\ep=\pm 1$, the following proposition is valid.

\begin{prop}
The following characteristics are valid for $(G,\ff,\xi,\eta,g)$:
\begin{enumerate}
\item The $\ff$B-connection $\dot D$ (respectively,
$\ff$-canonical connection $\ddot D$) is zero in the basis $\{e_0,e_1,e_2\}$;
\item The manifold is flat if and only if it belongs to either a subclass of $\F_1\oplus\F_{11}$ determined by conditions $\ta_1=\ep\ta_2=2\om_1=2\ep\om_2$ or a subclass of $\F_1$ determined by $\ta_1=\ep\ta_2$;
\item The manifold is Ricci-flat
(respectively, $*$-Ricci-flat) if and only if
it is flat.
\item 
The manifold is scalar flat if and only
if the conditions $\ta_1\pm\ta_2=\om_1\pm\om_2=0$ are satisfied;
\item 
The manifold is $*$-scalar flat if and only
if one of the following four conditions are satisfied: \\
$\ta_1-2\om_1=\ta_2-2\om_2=0$, \quad
$\ta_1=\om_1=0$, \quad
$\ta_2=\om_2=0$, \quad
$\om_1=\om_2=0$;
\item 
The manifold is an isotropic-$\F_0$-manifold if and only
if the conditions $\ta_1\pm\ta_2=\om_1\pm\om_2=0$ are satisfied.
\end{enumerate}
\end{prop}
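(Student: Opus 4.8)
The plan is to read every required quantity off the connection and curvature tables already assembled, so that each item reduces to elementary algebra in the four parameters, tied together by the normalized Jacobi relation $\ta_1\om_2=\ta_2\om_1$ of \eqref{mi:komutator=}. For (1) I would evaluate $\dot D_{e_i}e_j$ straight from \eqref{mi:defn-fiB}: the inputs $\n_x\ff,\ \n_x\eta,\ \n_x\xi$ all come from \eqref{mi:nabli} once one adds $\n_{e_1}e_0=\n_{e_2}e_0=o$, which follow from $\n g=0$ and the constancy of the metric coefficients. Substituting and simplifying, every bracket in \eqref{mi:defn-fiB} cancels the Levi-Civita term, giving $\dot D_{e_i}e_j=o$ throughout; since $\F_1\oplus\F_{11}$ lies inside the class on which $\dot D$ and $\ddot D$ coincide (recalled in \secref{mi:sec2}), the same conclusion holds for $\ddot D$.

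For (2), flatness is the vanishing of the four independent components in \eqref{mi:R-F1+F11}. I would first impose $R_{1221}=0$, i.e.\ $\ta_1=\ep\ta_2$, and $R_{0120}=0$, i.e.\ $\om_2=0$ or $\om_1=\frac12\ta_1$, and then feed each branch into $R_{0110}=R_{0220}=0$. The branch $\om_2=0$ forces $\om_1=0$, leaving the $\F_1$ family $\ta_1=\ep\ta_2$; the branch $\om_1=\frac12\ta_1$ (with $\ta_1\neq0$) forces $\om_2=\frac{\ep}{2}\ta_1$, i.e.\ the subclass $\ta_1=\ep\ta_2=2\om_1=2\ep\om_2$. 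This reproduces exactly the two stated families. For the Ricci-flat half of (3), the diagonal Ricci components read $\rho_{00}=R_{0110}-R_{0220}$, $\rho_{11}=R_{0110}-R_{1221}$, $\rho_{22}=R_{0220}+R_{1221}$, together with $\rho_{12}=R_{0120}$; the linear map from $(R_{0110},R_{0220},R_{1221})$ to $(\rho_{00},\rho_{11},\rho_{22})$ is nonsingular, so $\rho=0$ forces all four curvature components to vanish. This is the usual three-dimensional phenomenon that $R$ is algebraically determined by $\rho$; the converse is trivial.

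Parts (4)--(6) all rest on the same structural remark: the relation $\ta_1\om_2=\ta_2\om_1$ says that $(\ta_1,\ta_2)$ and $(\om_1,\om_2)$ are proportional, so when $(\om_1,\om_2)\neq(0,0)$ one may write $\ta_i=\lambda\om_i$. Then $\tau$ from \eqref{mi:R-F1+F11} factors as $(\om_1^2-\om_2^2)\bigl(\frac12\lambda^2+\lambda+2\bigr)$ with the quadratic factor strictly positive, while $\nf$ equals $2(\lambda^2+1)(\om_1^2-\om_2^2)$; hence both vanish precisely when $\om_1^2=\om_2^2$, which together with $\ta_i=\lambda\om_i$ is the correlated-sign condition $\ta_1\pm\ta_2=\om_1\pm\om_2=0$ (the case $\om=o$ being checked directly). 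This settles (4) and (6) simultaneously. For (5) I would use $\tau^*=\rho^*_{00}=(2\om_1-\ta_1)\om_2$; setting this to zero and intersecting with the proportionality relation splits cleanly into the four listed alternatives.

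The step I expect to be the main obstacle is the $*$-Ricci-flat half of (3). In this basis the associated tensor has only the two independent nonzero components $\rho^*_{00}=2R_{0120}$ and $\rho^*_{12}=-R_{1221}$ (the rest vanish by the antisymmetries, in accordance with $\tau^*=\rho^*_{00}$), so $\rho^*=0$ controls only $R_{0120}$ and $R_{1221}$ and says nothing directly about $R_{0110}$ or $R_{0220}$. Thus one cannot invert as in the Ricci case; recovering flatness requires bringing the constraint $\ta_1\om_2=\ta_2\om_1$ to bear and scrutinizing the residual branches $\om_2=0$ and $\om_1=\frac12\ta_1$. This is the place where I would concentrate the verification, and where the argument is most delicate.
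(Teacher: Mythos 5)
Your items (1), (2), (4), (5), (6) and the Ricci-flat half of (3) are correct and run along essentially the paper's own route: direct evaluation of \eqref{mi:defn-fiB} from \eqref{mi:nabli} (your observation $\n_{e_1}e_0=\n_{e_2}e_0=o$ is right and needed), branch analysis of \eqref{mi:R-F1+F11} for flatness, inversion of the nonsingular map $(R_{0110},R_{0220},R_{1221})\mapsto(\rho_{00},\rho_{11},\rho_{22})$ together with $\rho_{12}=R_{0120}$ for Ricci-flatness (the paper writes $\rho_{jk}=R_{0jk0}+R_{1jk1}-R_{2jk2}$ and leaves the inversion implicit), and \eqref{norm fi} for (6). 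Your factorizations $\tau=\bigl(\om_1^2-\om_2^2\bigr)\bigl(\tfrac12\lm^2+\lm+2\bigr)$ and $\nf=2(\lm^2+1)\bigl(\om_1^2-\om_2^2\bigr)$ under $\ta_i=\lm\om_i$ are a clean packaging of what the paper verifies directly, and your four-branch split for (5) checks out against the Jacobi relation $\ta_1\om_2=\ta_2\om_1$.

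The genuine gap is exactly where you flagged it: the $*$-Ricci-flat half of (3) is left as a promissory note, and if you carry out the branch analysis you propose, it does not close --- it refutes the stated equivalence. As you correctly observe, $\rho^*$ has only the two independent components $\rho^*_{00}=2R_{0120}$ and $\rho^*_{12}=-R_{1221}$, so $\rho^*=0$ is equivalent to $\ta_1=\ep\ta_2$ together with $\bigl(\om_1-\tfrac12\ta_1\bigr)\om_2=0$. Intersecting with $\ta_1\om_2=\ta_2\om_1$, the branch $\om_2=0$ gives $\ta_2\om_1=0$, and the sub-branch $\ta_1=\ta_2=\om_2=0$, $\om_1\neq0$ survives: this is a legitimate member of the family (a pure $\F_{11}$-manifold, $a=b=d=0$, $c=\om_1$, with $ad=bc$ trivially satisfied) on which $\rho^*\equiv0$ while $R_{0220}=-\om_1^2\neq0$, so it is $*$-Ricci-flat but not flat (not even Ricci-flat, since $\rho_{00}=\om_1^2$); the mirror case $\ta_1=\ta_2=\om_1=0$, $\om_2\neq0$ behaves the same via $R_{0110}=-\om_2^2$. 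Hence $*$-Ricci-flatness here is strictly weaker than flatness: it equals flatness together with the two extra $\F_{11}$-families with $\om_1\om_2=0$, $\om\neq0$ --- equivalently, $*$-scalar flatness plus $\ta_1=\ep\ta_2$.

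So your proposal is incomplete at precisely the step you yourself identified as delicate, and the step cannot be completed as an equivalence proof. For what it is worth, the paper's own proof of (3) consists of the one-line appeal to the trace formulas $\rho^*_{jk}=R_{1jk2}+R_{2jk1}$; as your analysis shows, these recover only $R_{0120}$ and $R_{1221}$ and therefore do not establish the $*$-half of the assertion either --- your instinct located a real defect in the statement, but a finished argument must either exhibit the counterexample above or restate (3) for $\rho^*$ accordingly.
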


\begin{proof}
Using \eqref{mi:defn-fiB}, \eqref{mi:f} and \eqref{mi:nabli}, we
get immediately the assertion (1).
%
%
The assertions (2), (4) and (5) are obtained using \eqref{mi:R-F1+F11}.
On the three-dimensional almost contact B-metric manifold with the
basis $\left\{e_0,e_1,e_2\right\}$, bearing in mind the definition
equalities of the Ricci tensor $\rho$ and the $\rho^*$, we have
\[
\rho_{jk}=R_{0jk0}+R_{1jk1}-R_{2jk2} \qquad
\rho^*_{jk}=R_{1kj2}+R_{2jk1}.
\]
By virtue of the latter equalities, we get the assertion (3).
Bearing in mind \eqref{norm fi}, we establish the truthfulness of (6).
\end{proof}

According to \eqref{mi:R-F1+F11}, \eqref{norm fi} and
\eqref{mi:norm n}, we obtain
\begin{prop}\label{mm:prop}
The following properties are equivalent for the manifold $(G,\ff,\xi,\eta,g)$:
\begin{enumerate}
\item it is an
isotropic-$\F_0$-manifold;
\item it is scalar flat;
\item it is $*$-Einsteinian;
\item
the dual vectors $\Theta$, $\Omega$ of $\ta$, $\om$, respectively, and the vector $\n_{\xi}\xi$ 
are isotropic;
\item the sectional curvatures $k_{ij}$ vanish;
\item the equalities $\ta_1\pm\ta_2=\om_1\pm\om_2=0$ are
valid.
\end{enumerate}
\end{prop}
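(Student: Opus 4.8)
The plan is to translate each of the six properties into an explicit algebraic condition on the four structure constants $\ta_1,\ta_2,\om_1,\om_2$, subject to the Jacobi constraint $\ta_1\om_2=\ta_2\om_1$ recorded in \eqref{mi:komutator=}, and then to verify that all six conditions describe one and the same locus in parameter space, namely (6). The observation that drives every reduction is that this constraint forces the pairs $(\ta_1,\ta_2)$ and $(\om_1,\om_2)$ to be linearly dependent; hence, off the locus $\om_1=\om_2=0$, one may write $\ta_i=\lm\om_i$, so that $\ta_1^2-\ta_2^2=\lm^2(\om_1^2-\om_2^2)$ and the two Lorentzian expressions $\ta_1^2-\ta_2^2$ and $\om_1^2-\om_2^2$ carry the same sign. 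On the remaining locus $\om_1=\om_2=0$ everything is governed by $\ta_1^2-\ta_2^2$ alone.

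I would dispatch (1), (2) and (6) by appealing to the preceding Proposition, which already equates both the isotropic-$\F_0$ condition $\norm{\n\ff}=0$ and scalar flatness $\tau=0$ with (6); equivalently, one reads these off \eqref{norm fi} and \eqref{mi:R-F1+F11}. The co-sign remark makes $\norm{\n\ff}=2\bigl((\ta_1^2-\ta_2^2)+(\om_1^2-\om_2^2)\bigr)$ vanish exactly when both summands vanish, while substituting $\ta_i=\lm\om_i$ gives $\tau=(\om_1^2-\om_2^2)\bigl(\tfrac12\lm^2+\lm+2\bigr)$, whose second factor has negative discriminant and is therefore never zero; thus $\tau=0$ forces $\om_1^2-\om_2^2=0$, which is (6).

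For (4) I would compute, from \eqref{mi:g} and \eqref{mi:nabli}, the three pseudo-norms $g(\Theta,\Theta)=\ta_1^2-\ta_2^2$, $g(\Omega,\Omega)=\om_1^2-\om_2^2$ and, using $\n_\xi\xi=\om_2e_1+\om_1e_2$, $g(\n_\xi\xi,\n_\xi\xi)=-(\om_1^2-\om_2^2)$; the co-sign remark then identifies the simultaneous isotropy of $\Theta,\Omega,\n_\xi\xi$ with (6). The genuinely computational items are (3) and (5): here I would extract the components $\rho_{jk}$ (and $\rho^*_{jk}$) together with the sectional curvatures $k_{01},k_{02},k_{12}$ from \eqref{mi:R-F1+F11}, impose the $*$-Einstein relation $\rho=\al\,\tg|_H$ and the vanishing $k_{01}=k_{02}=k_{12}=0$, and reduce the resulting systems by means of $\ta_i=\lm\om_i$.

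The step I expect to be the main obstacle is exactly this reduction of (3) and (5). In contrast to (1), (2), (4), each of which is a single Lorentzian norm and collapses at once, the $*$-Einstein and sectional-curvature conditions yield several coupled quadratics---for (5), for instance, $\tfrac12\ta_1\om_1=\om_2^2$, $\om_1^2=\tfrac12\ta_2\om_2$ and $\ta_1^2=\ta_2^2$---and the crux is to determine, after eliminating one of the two proportional pairs $(\ta_1,\ta_2)$, $(\om_1,\om_2)$ via the Jacobi relation and discarding the branches excluded by the signature, whether this over-determined system really collapses onto the variety (6) or onto a strictly smaller one. This is where the sign bookkeeping is most delicate and where I would concentrate the verification.
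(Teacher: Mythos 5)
Your handling of (1), (2) and (4) is complete and correct, and it is in substance the only proof available: the paper itself proves this proposition with a single sentence appealing to \eqref{mi:R-F1+F11}, \eqref{norm fi} and \eqref{mi:norm n}, so the intended argument is exactly the parameter-space bookkeeping you describe. Your two tools --- the linear dependence of $(\ta_1,\ta_2)$ and $(\om_1,\om_2)$ forced by the Jacobi relation $\ta_1\om_2=\ta_2\om_1$, and the negative discriminant of $\tfrac12\lm^2+\lm+2$ --- do settle those three items, including the sign-coupling in (6).

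The genuine gap is exactly where you flagged it: you never carry out the reduction of (3) and (5), and when one does, it does not close onto (6). Your system for (5), namely $\tfrac12\ta_1\om_1=\om_2^2$, $\om_1^2=\tfrac12\ta_2\om_2$, $\ta_1^2=\ta_2^2$, combined with $\ta_i=\lm\om_i$, forces $\om_1^2=\tfrac{\lm^2}{4}\om_1^2$, hence either $\om_1=\om_2=0$ with $\ta_1=\pm\ta_2$, or $\lm=2$ with $\om_1^2=\om_2^2$ --- which is precisely the \emph{flatness} locus $\ta_1=\ep\ta_2=2\om_1=2\ep\om_2$ (or the flat $\F_1$ case) of assertion (2) of the preceding proposition, a strictly smaller variety than (6). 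Concretely, take $\ta_1=\ta_2=0$, $\om_1=\om_2=1$: the Jacobi relation and (6) hold, $\norm{\n\ff}=\tau=0$, yet by \eqref{mi:R-F1+F11} one has $k_{01}=-1$, $k_{02}=1$, and $\rho_{00}=0$, $\rho_{11}=\rho_{22}=-1$, $\rho_{12}=1$, which is not of the form $\al\,\tg|_H$ because $\tg|_H$ has vanishing diagonal components on $\mathrm{span}(e_1,e_2)$. The same elimination shows the $*$-Einstein system $\rho_{00}=\rho_{11}=\rho_{22}=0$ also collapses onto the flatness locus (with $\al=0$ necessarily). What is actually true under (6) is only the weaker pair of relations $k_{12}=0$ and $k_{01}+k_{02}=0$, equivalently that $\rho$ degenerates to a rank-one tensor along an isotropic direction. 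So the step you proposed to ``verify'' --- that the systems for (3) and (5) collapse onto (6) --- would fail under the paper's literal definitions of $*$-Einstein and of the sectional curvatures $k_{ij}$; no amount of sign bookkeeping rescues it, and since the paper supplies no computation of its own for these two items, your proof attempt must either reinterpret (3) and (5) or record them as inequivalent to (1), (2), (4), (6) on this family.
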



\end{document}